\numberwithin{equation}{section}
\newtheorem{thm}{Theorem}[section]
\newtheorem{lem}[thm]{Lemma}
\newtheorem{cor}[thm]{Corollary}
\newtheorem{rem}[thm]{Remark}
\def\C{\mathbb{C}}
\def\Z{\mathbb{Z}}
\def\cR{\mathcal{R}}
\def\cH{\mathcal{H}}
\def\cT{\mathcal{T}}
\def\Y{{\rm Y}_{d,n}^{\mathrm{aff}}(q)}
\begin{document}

\title[The affine Yokonuma--Hecke algebra and the pro-$p$-Iwahori--Hecke algebra]
  {The affine Yokonuma--Hecke algebra\\ and the pro-$p$-Iwahori--Hecke algebra}

\author{Maria Chlouveraki}
\address{Laboratoire de Math\'ematiques de Versailles, 
Universit\'e de Versailles St-Quentin, 
B\^atiment Fermat, 45 avenue des \'Etats-Unis, 78035 Versailles cedex, France.}
\email{maria.chlouveraki@uvsq.fr}

\author{Vincent S\'echerre}
\address{Laboratoire de Math\'ematiques de Versailles, 
Universit\'e de Versailles St-Quentin, 
B\^atiment Fermat, 45 avenue des \'Etats-Unis, 78035 Versailles cedex, France.}
\email{vincent.secherre@uvsq.fr}

\subjclass[2010]{20C08} 

\thanks{We are grateful to Guillaume Pouchin for his remarks and always 
  helpful explanations.  We would also like to thank Lo\"ic Poulain d'Andecy for our fruitful 
discussions on the topics of this paper.}


\begin{abstract}
We prove that the affine Yokonuma--Hecke algebra defined by 
Chlouveraki and Poulain d'Andecy is a particular case of the 
pro-$p$-Iwahori--Hecke algebra defined by Vign\'eras.
\end{abstract}

\maketitle

\section{Introduction}\label{sec-intro}

A family of complex algebras $\Y$, called affine 
Yokonuma--Hecke algebras, has been defined and studied by Chlouveraki and 
Poulain d'Andecy in \cite{ChPo2}. 
The existence of these algebras has been first mentioned by Juyumaya and 
Lambropoulou in \cite{jula4}. 
These algebras, which generalise both affine Hecke algebras of type $A$ and 
Yokonuma--Hecke algebras \cite{yo}, are used to 
determine the representations of Yokonuma--Hecke algebras \cite{ChPo} and 
construct invariants 
for framed and classical knots in the solid torus \cite{ChPo2}.
Moreover, when $q^2$ is a power of a prime number $p$ and $d=q^2-1$, one can 
verify that the affine Yokonuma--Hecke 
algebra $\Y$ is isomorphic to the convolution algebra 
of complex valued and compactly supported functions on the group 
${\rm GL}_{n}(F)$, with $F$ a suitable $p$-adic field, that are bi-invariant 
under the pro-$p$-radical of an Iwahori subgroup (cf.~\cite{Vi1}). 
It is natural to ask whether there is a family of algebras that generalises, 
in a similar way, affine Hecke algebras in arbitrary type. 

In a recent series of papers \cite{Vi2,Vi3,Vi4}, Vign\'eras introduced 
and studied a large family of algebras, called pro-$p$-Iwahori--Hecke algebras. 
They generalise convolution algebras of compactly supported functions on a 
$p$-adic connected reductive group that are bi-invariant under the 
pro-$p$-radical of an Iwahori subgroup, which play 
an important role in the $p$-modular representation 
theory of $p$-adic reductive groups (see \cite{AHHV} for instance).

In this note, we show that the algebra $\Y$ 
of Chlouveraki and Poulain d'Andecy is a pro-$p$-Iwahori--Hecke algebra in the 
sense of Vign\'eras \cite{Vi2}. 

\section{The affine Yokonuma--Hecke algebra}\label{sec-def}

Let $d,n \in \Z_{>0}$. Let $q$ be an indeterminate or a non-zero complex number, and set $\cR:=\C[q,q^{-1}]$. 
We denote by $\Y$ the associative algebra over $\cR$ generated by elements
$$t_1,\ldots,t_n,g_1,\ldots,g_{n-1},X_1,X_1^{-1}$$
subject to the following defining relations:
\begin{equation}\label{def-aff1}
\begin{array}{lcrclcl}
\text{(br1)} & &g_ig_j & = & g_jg_i && \mbox{for all $i,j=1,\ldots,n-1$ such that $\vert i-j\vert > 1$,}\\[0.1em]
\text{(br2)} & &g_ig_{i+1}g_i & = & g_{i+1}g_ig_{i+1} && \mbox{for  all $i=1,\ldots,n-2$,}\\[0.1em]
\text{(fr1)} & &t_it_j & =  & t_jt_i &&  \mbox{for all $i,j=1,\ldots,n$,}\\[0.1em]
\text{(fr2)} & &g_it_j & = & t_{s_i(j)}g_i && \mbox{for all $i=1,\ldots,n-1$ and $j=1,\ldots,n$,}\\[0.1em]
\text{(fr3)} &  &t_j^d   & =  &  1 && \mbox{for all $j=1,\ldots,n$,}\\[0.1em]
\text{(aff1)} &  &X_1\,g_1X_1g_1 & =  & g_1X_1g_1\,X_1  &&\\[0.1em]
\text{(aff2)}&  &X_1g_i & = & g_iX_1 && \mbox{for all $i=2,\ldots,n-1$,}\\[0.1em]
\text{(aff3)} &  &X_1t_j & = & t_jX_1 && \mbox{for all $j=1,\ldots,n$.}\\[0.1em]
\text{(quad)} &  &g_i^2  & = & 1 + (q-q^{-1}) \, e_{i} \, g_i && \mbox{for  all $i=1,\ldots,n-1$,}
\end{array}
\end{equation}
where   $s_i$ denotes the transposition $(i,i+1)$ and
\begin{equation}\label{e_i}
e_i :=\frac{1}{d}\sum\limits_{k=0}^{d-1}t_i^k t_{i+1}^{d-k}\,
\end{equation}
for all $i=1,\ldots,n-1$.
The algebra $\Y$ was called in \cite{ChPo} the \emph{affine Yokonuma--Hecke algebra} and was studied in \cite{ChPo2}, together with its cyclotomic quotients. 
This algebra is isomorphic to the modular framisation of the affine Hecke algebra; see definition in \cite[Section 6]{jula5} and Remark 1 in \cite{ChPo}.   
For $d=1$, ${\rm Y}_{1,n}^{\mathrm{aff}}(q)$ is the standard affine Hecke algebra of type $A$.

\begin{rem}{\rm Relations (br1), (br2), (aff1) and (aff2) are the defining relations of the \emph{affine braid group} $B_n^{\mathrm{aff}}$. 
Adding relations (fr1), (fr2) and (aff3) yields the definition of the \emph{extended affine braid group} or \emph{framed affine braid group} $\Z^n \rtimes  B_n^{\mathrm{aff}}$, with the $t_j$'s being interpreted as the ``elementary framings" (framing $1$ on the $j$th strand). The quotient of $\Z^n \rtimes  B_n^{\mathrm{aff}}$ over the relations (fr3) is the \emph{modular framed affine braid group} $(\Z/d\Z)^n \rtimes B_n^{\mathrm{aff}}$ (the framing of each braid strand is regarded modulo~$d$). 
Thus, the affine Yokonuma--Hecke algebra $\Y$ can be obtained as the quotient of the group algebra $\cR[(\Z/d\Z)^n \rtimes B_n^{\mathrm{aff}}]$   over the quadratic relation (quad).}
\end{rem}

Note that, for all $i=1,\ldots,n-1$, the elements $e_i$ are idempotents, and we have $g_i e_i = e_i g_i$. Moreover, 
the elements $g_i$ are invertible, with
\begin{equation}\label{inverse}
g_i^{-1} = g_i - (q- q^{-1})\, e_i  \qquad \mbox{for all $i=1,\ldots,n-1$}.
\end{equation}

Now, for $i,l=1,\ldots,n$, we set
\begin{equation}\label{e_il}
e_{i,l} :=\frac{1}{d}\sum\limits_{k=0}^{d-1}t_i^k t_{l}^{d-k}\,
\end{equation}
The elements $e_{i,l}$ are idempotents.  We also have $e_{i,i}=1$, $e_{i,i+1}=e_i$ and $e_{i,l}=e_{l,i}$. Moreover, it is easy to check that
\begin{equation}\label{useful e_i}
t_j e_{i,l} = t_{s_{i,l}(j)}e_{i,l} = e_{i,l} t_{s_{i,l}(j)}=e_{i,l}t_j \ \qquad \mbox{for all $j=1,\ldots,n$},
\end{equation}
where   $s_{i,l}$ denotes the transposition $(i,l)$.

We define inductively elements $X_2,\ldots,X_n$ of $\Y$ by
\begin{equation}\label{rec-X}
X_{i+1}:=g_iX_ig_i\ \ \ \ \text{for $i=1,\ldots,n-1$.}
\end{equation}
We have that the elements $t_1,\ldots,t_n,X_1^{\pm 1},\ldots,X_n^{\pm 1}$ form a commutative family  \cite[Proposition 1]{ChPo}.
Moreover, in \cite[Lemma 1]{ChPo}, it is proved that,
 for any $i\in\{1,\ldots,n\}$, we have
\begin{equation}\label{g-X}
g_jX_i=X_ig_j \quad\text{and} \quad g_jX_i^{-1}=X_i^{-1}g_j \ \ \ \text{for $j=1,\ldots,n-1$ such that $j\neq i-1,i$.}
\end{equation}
Finally, using (\ref{def-aff1})(quad) and (\ref{inverse}), it is easy to check that, for any $i\in\{1,\ldots,n\}$, we have
\begin{equation}\label{noname1}
g_iX_i=X_{i+1}g_i-(q-q^{-1})e_iX_{i+1} \quad\text{and} \quad  g_i X_{i+1}=X_ig_i+(q-q^{-1}) e_iX_{i+1}\,,
\end{equation}
which in turn yields
\begin{equation}\label{noname2}
g_iX_i^{-1}=X_{i+1}^{-1}g_i+(q-q^{-1})e_iX_{i}^{-1}\quad\text{and} \quad  g_i X_{i+1}^{-1}=X_i^{-1}g_i-(q-q^{-1})e_i X_{i}^{-1} \,.
\end{equation}
We can easily prove by induction, on $a,b \in \Z_{>0}$, that the following equalities hold:
\begin{equation}\label{noname3}
g_iX_i^a=X_{i+1}^ag_i-(q-q^{-1})e_i\sum_{k=0}^{a-1}X_i^{k}X_{i+1}^{a-k} \quad \text{and} \quad g_iX_{i+1}^b=X_i^bg_i+(q-q^{-1})e_i\sum_{k=0}^{b-1}X_i^{k}X_{i+1}^{b-k}
\end{equation}
\begin{equation}\label{noname4}
g_iX_i^{-a}=X_{i+1}^{-a}g_i+(q-q^{-1})e_i\sum_{k=0}^{a-1}X_i^{-a+k}X_{i+1}^{-k} \quad \text{and} \quad g_iX_{i+1}^{-b}=X_i^{-b}g_i-(q-q^{-1})e_i\sum_{k=0}^{b-1}X_i^{-b+k}X_{i+1}^{-k}
\end{equation} 
Note also that
\begin{equation}
g_i X_iX_{i+1} = g_iX_ig_iX_ig_i =X_{i+1}X_i g_i = X_iX_{i+1}g_i \quad \text{and} \quad g_i X_i^{-1}X_{i+1}^{-1}  = X_i^{-1}X_{i+1}^{-1}g_i. 
\end{equation}
The above formulas 
yield it turn the following lemma:
\begin{lem}\label{lem-form}
We have the following identities satisfied in $\Y$ ($i=1,\ldots,n-1$):
\begin{equation}\label{formAK}
g_iX_i^aX_{i+1}^b=\left\{\begin{array}{ll} \!\!X_i^bX_{i+1}^ag_i-(q-q^{-1})e_i\sum\limits_{k=0}^{a-b-1}X_i^{b+k}X_{i+1}^{a-k} & \text{if $a\geq b$,}\\[1.4em]
\!\!X_i^bX_{i+1}^ag_i+(q-q^{-1})e_i\sum\limits_{k=0}^{b-a-1}X_i^{a+k}X_{i+1}^{b-k} & \text{if $a\leq b$,}
\end{array}\right.\ \qquad a,b\in\Z\,.
\end{equation}
\end{lem}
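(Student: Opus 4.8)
The plan is to prove the identity \eqref{formAK} by reducing the general case $a,b\in\Z$ to the already-established special cases, using the commutation relation $g_iX_iX_{i+1}=X_iX_{i+1}g_i$ from the displayed equation just above the lemma as the key simplifying device. The idea is that multiplying $X_i^aX_{i+1}^b$ by the appropriate power of the central (with respect to $g_i$) element $X_iX_{i+1}$ lets us shift both exponents simultaneously without changing the structure of the formula, so that it suffices to handle the cases where one exponent is zero.

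Concretely, for the case $a\ge b$ I would first treat the subcase $b=0$, where the claim reduces to the first formula in \eqref{noname3} (for $a>0$) or \eqref{noname4} (for $a<0$), noting that the summation range $0\le k\le a-b-1=a-1$ matches exactly. For general $a\ge b$, I would write $X_i^aX_{i+1}^b=(X_iX_{i+1})^bX_i^{a-b}$ and use $g_i(X_iX_{i+1})^b=(X_iX_{i+1})^bg_i$ to move the $b$-th power of $X_iX_{i+1}$ past $g_i$ freely; then I apply the $b=0$ case to $g_iX_i^{a-b}$. Reindexing the resulting sum (the summation index $k$ in the $b=0$ case runs from $0$ to $a-b-1$, and multiplying each term $X_i^kX_{i+1}^{a-b-k}$ by $(X_iX_{i+1})^b$ raises both exponents by $b$, giving $X_i^{b+k}X_{i+1}^{a-k}$) reproduces the stated formula precisely. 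The case $a\le b$ is entirely symmetric: I would write $X_i^aX_{i+1}^b=(X_iX_{i+1})^aX_{i+1}^{b-a}$, pull the power of $X_iX_{i+1}$ through $g_i$, and invoke the second formulas in \eqref{noname3} and \eqref{noname4} for $g_iX_{i+1}^{b-a}$.

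The case $a=b$ is a consistency check: both branches collapse to $g_iX_i^aX_{i+1}^a=X_i^aX_{i+1}^ag_i$ with an empty correction sum, which is exactly $g_i(X_iX_{i+1})^a=(X_iX_{i+1})^ag_i$, so the two formulas agree and the statement is unambiguous. I would also verify that the inductive base formulas \eqref{noname1}--\eqref{noname4} have been established for all four sign combinations of the exponents, so that the $b=0$ reduction is valid whether $a-b$ is positive, negative, or zero.

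I expect the main obstacle to be purely bookkeeping: getting the summation indices and the exponent shifts to line up after multiplying through by $(X_iX_{i+1})^b$, and confirming that the idempotent $e_i$ commutes appropriately with the powers of $X_i,X_{i+1}$ (which it does, since $e_i$ is built from the $t_j$ and one checks via (fr2) and \eqref{useful e_i} that $e_i$ commutes with $X_iX_{i+1}$ and more generally with the symmetric monomials appearing in the correction terms). No genuinely new relation is needed beyond those already recorded in the excerpt; the content of the lemma is the clean packaging of \eqref{noname3}--\eqref{noname4} into a single formula valid for all integer exponents.
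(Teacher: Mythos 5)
Your proposal is correct and matches the paper's intended argument: the paper gives no explicit proof, saying only that the formulas \eqref{noname1}--\eqref{noname4} together with $g_iX_iX_{i+1}=X_iX_{i+1}g_i$ ``yield in turn'' the lemma, and your reduction---factoring out $(X_iX_{i+1})^{\min(a,b)}$, which commutes with $g_i$ and with $e_i$, and then applying \eqref{noname3} (the case $\min(a,b)=b$ needs only nonnegative exponent $a-b$, so \eqref{noname4} is not even strictly required)---is precisely the natural way to fill in that derivation. The bookkeeping you outline (exponent shift $X_i^kX_{i+1}^{a-b-k}\mapsto X_i^{b+k}X_{i+1}^{a-k}$, and commutation of $e_i$ with the $X_j^{\pm1}$ via the commutative family $t_1,\ldots,t_n,X_1^{\pm1},\ldots,X_n^{\pm1}$) checks out exactly.
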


Let $w \in \mathfrak{S}_n$,  where $\mathfrak{S}_n$ is the symmetric group on $n$ letters, and let $w=s_{i_1}s_{i_2}\ldots s_{i_r}$ be a reduced expression for $w$. 
Since the generators $g_i$ of $\Y$ satisfy the same braid relations, (br1) and (br2), as the generators of $\mathfrak{S}_n$, Matsumoto's lemma implies that the element 
$g_w:=g_{i_1}g_{i_2}\ldots g_{i_r}$
is well-defined, that is, it does not depend on the choice of the reduced expression for $w$.
We then obtain an $\cR$-basis of $\Y$ as follows:

\begin{thm}\label{theo-bases}\cite[Theorem 4.15]{ChPo2}
The set 
$$\mathcal{B}^{\mathrm{aff}}_{d,n}= \left\{t_1^{a_1}\cdots t_n^{a_n} \,X_1^{b_1}\cdots X_n^{b_n}\, g_w\,\left|\,\, a_1,\ldots,a_n\in \Z/d\Z,\,\,b_1,\ldots,b_n \in \Z,\,\, w \in \mathfrak{S}_n \right.\right\}$$
 is an $\cR$-basis of $\Y$. 
\end{thm}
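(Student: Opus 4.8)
The plan is to prove Theorem~\ref{theo-bases} by a standard spanning-plus-independence argument. The set $\mathcal{B}^{\mathrm{aff}}_{d,n}$ is a product of three commuting-ish families — the modular framings $t_i^{a_i}$, the affine part $X_i^{b_i}$, and the finite Hecke part $g_w$ — so the natural strategy is to (i) show these elements span $\Y$ over $\cR$, and (ii) exhibit a free module structure forcing them to be linearly independent.

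For spanning, I would argue that every word in the generators $t_i, g_i, X_1^{\pm 1}$ can be rewritten into the prescribed normal form $t^{\ua} X^{\ub} g_w$. The relations let me push all $t$-factors to the left: relation (fr2) moves $g_i$ past $t_j$ at the cost of permuting the index, (aff3) commutes $X_1$ past the $t_j$'s, and (fr1) collects them, with (fr3) reducing exponents modulo $d$. Next I would move the $X$-factors to the middle: Lemma~\ref{lem-form} together with (g-X) shows that sliding a $g_i$ leftward past a monomial $X_i^a X_{i+1}^b$ produces the permuted monomial $X_i^b X_{i+1}^a g_i$ plus lower terms supported on $e_i$ (which is itself a polynomial in the $t$'s, hence harmless for the $t^{\ua}$-collection step). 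Finally, the $g$-part is reduced to a single $g_w$ using the braid relations (br1),(br2) and the quadratic relation (quad): whenever a reduced word is not reduced, (quad) lets me replace $g_i^2$ by $1 + (q-q^{-1})e_i g_i$, strictly decreasing the length, so an induction on the length of the $g$-word terminates. The upshot is a finite generation statement; the only subtlety is checking the rewriting is confluent, i.e.\ that the straightening terminates and the collected framings remain in the index range $\Z/d\Z$.

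For linear independence, the cleanest route is to construct a faithful-enough representation in which the images of $\mathcal{B}^{\mathrm{aff}}_{d,n}$ are manifestly independent. I would let $V$ be the free $\cR$-module on the symbols $t^{\ua} X^{\ub} g_w$ themselves and define the action of each generator by the straightening rules above, then verify that all defining relations \eqref{def-aff1} are preserved — this realises $V$ as a $\Y$-module on which $1 \in \Y$ acts as the basis element indexed by $(\bempty,\bempty,\mathrm{id})$, so no nontrivial $\cR$-combination of basis elements can be zero. Since Theorem~\ref{theo-bases} is quoted as \cite[Theorem 4.15]{ChPo2}, I may instead simply cite that construction; but if proving it afresh, the representation-theoretic verification is the load-bearing step.

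\textbf{The main obstacle} is the linear-independence half: the spanning argument is essentially bookkeeping with the explicit commutation formulas \eqref{noname3}--\eqref{formAK}, but checking that the straightening rules define a genuine $\Y$-action requires confirming that relations (aff1) and (quad) are respected — in particular that the two ways of straightening $g_i X_i g_i X_i g_i$ agree, and that the idempotent corrections $(q-q^{-1})e_i$ bookkeep consistently across the three families. Because the $e_i$ couple the framing indices to the affine exponents through \eqref{useful e_i}, the consistency check is where all the relations genuinely interact, and it is exactly what must be verified with care.
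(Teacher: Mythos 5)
The first thing to say is that this paper does not prove Theorem \ref{theo-bases} at all: the statement is imported from \cite[Theorem 4.15]{ChPo2} and used as a black box in the proof of Theorem \ref{main}. So your fallback option --- ``simply cite that construction'' --- is precisely what the paper does, and there is no in-paper proof against which to compare your outline.

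Judged as a self-contained argument, your outline has the right shape (it is the standard spanning-plus-faithful-module strategy used for algebras of Ariki--Koike type), and the spanning half is indeed routine bookkeeping with \eqref{g-X}, Lemma \ref{lem-form} and (quad); note that for spanning you only need \emph{termination} of the rewriting, not confluence --- confluence is essentially equivalent to the linear independence you are trying to establish, so it should not be listed as a subtlety of the spanning step. The genuine gap is that the linear-independence half is announced rather than executed: verifying that your straightening rules define a $\Y$-module structure on $V$, i.e.\ that the operators assigned to the generators themselves satisfy all of \eqref{def-aff1}, in particular (br2), (aff1) and (quad) in the presence of the correction terms $(q-q^{-1})\,e_i\sum_{k}X_i^{k}X_{i+1}^{a-k}$, is the entire content of the theorem, and you explicitly defer it. Within this paper's own framework there is a cleaner way to discharge that verification which you did not exploit: the homomorphism $\varphi:\Y\rightarrow\cH_{\cR}(1,c_s)$ in the proof of Theorem \ref{main} is constructed purely from the defining relations (no basis theorem enters that step), and the computation there shows that $\varphi$ sends the elements of $\mathcal{B}^{\mathrm{aff}}_{d,n}$ to the pairwise distinct elements $E\bigl(t_1^{a_1}\cdots t_n^{a_n}\,(b_1,\ldots,b_n)\,w\bigr)$ of Vign\'eras' basis of $\cH_{\cR}(1,c_s)$, which is a free $\cR$-module by Theorems \ref{thm1} and \ref{thm2}. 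Hence any vanishing $\cR$-combination of elements of $\mathcal{B}^{\mathrm{aff}}_{d,n}$ has all coefficients zero; combined with your spanning argument this proves Theorem \ref{theo-bases} (and simultaneously the bijectivity of $\varphi$) without circularity, outsourcing the hard consistency check to \cite[Theorem 2.4]{Vi2} rather than to \cite{ChPo2} or to a verification of your own.
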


For $d=1$, $\mathcal{B}^{\mathrm{aff}}_{1,n}$ is the standard Bernstein basis of the affine Hecke algebra  ${\rm Y}_{1,n}^{\mathrm{aff}}(q)$ of type $A$.

\section{The pro-$p$-Iwahori--Hecke algebra}

Let $\Lambda$ denote the abelian group $\Z^n$. 
For $\lambda = (\lambda_1,\ldots,\lambda_n), \, \lambda' = (\lambda_1',\ldots,\lambda_n') \in \Lambda$, we have 
$\lambda \lambda ' = \lambda'\lambda = (\lambda_1+\lambda_1',\ldots,\lambda_n+\lambda_n')$. We denote by
$\lambda \circ \lambda'$ the dot product of $\lambda$ and $\lambda'$, that is, the integer $\sum_{j=1}^n\lambda_j\lambda_j'$.
We set $\varepsilon_i : = (0,0,\ldots,0,1,-1,0,\ldots,0) \in \Lambda$, where $1$ is in the $i$-th position and $-1$ is in the $(i+1)$-th position, for $i=1,\ldots,n-1$. 

Let $W$ be the extended affine Weyl group $\Lambda \rtimes \mathfrak{S}_n$ of type $A$. 
For all $\sigma \in \mathfrak{S}_n$ and $\lambda=(\lambda_1,\ldots,\lambda_n) \in \Lambda$, we set
$$ \sigma(\lambda) := \sigma \lambda \sigma^{-1} = (\lambda_{\sigma(1)},\ldots,\lambda_{\sigma(n)}). $$

Now, the symmetric group $ \mathfrak{S}_n $ is generated by the set 
$S =\{s_1,\ldots,s_{n-1}\}$, where $s_i$ denotes the transposition $(i,i+1)$. 
We set
$$
\gamma:= s_{n-1}s_{n-2} \ldots s_2s_1 \in \mathfrak{S}_n \quad \text{and} \quad  h:=(0,0,\ldots,0,1)\, \gamma \in W.
$$
Note that we have $hs_ih^{-1}=s_{i-1}$ for all $i=2,\dots,n-1$. Set $s_0 := hs_1h^{-1} \in W$.
Then the set $S^{\mathrm{aff}} =S \cup \{s_0\}$ is a generating set of the 
affine Weyl group $W^{\mathrm{aff}}$ of type $A$ and we have 
$W = \langle h \rangle \rtimes W^{\mathrm{aff}}$. Moreover, we can extend the 
length function $\ell$ of $W^{\mathrm{aff}}$ to $W$ by setting $\ell(h^k 
w^{\mathrm{aff}}) = \ell(w^{\mathrm{aff}})$ for all $w^{\mathrm{aff}} \in 
W^{\mathrm{aff}}$, $k \in \Z$.

Let $\cT$ be a (finite) abelian group such that $\mathfrak{S}_n$ acts on $\cT$. Like above, for all $\sigma \in \mathfrak{S}_n$ and $t\in \cT$, we set
$\sigma(t):=\sigma t \sigma^{-1}$.

We consider the semi-direct product $\widetilde{W} := (\cT \times \Lambda) \rtimes \mathfrak{S}_n$.
Every element of $\widetilde{W}$ can be written in the form $t \, \lambda \, \sigma$, with $t \in \cT$, $\lambda \in \Lambda$, $\sigma \in \mathfrak{S}_n$. Note that $t$ and $\lambda$ commute with each other. We set  $\widetilde{\Lambda}:= \cT \times \Lambda$ and $\widetilde{\mathfrak{S}}_n:= \cT \rtimes \mathfrak{S}_n$. We have $ \widetilde{W} = \widetilde{\Lambda}\widetilde{\mathfrak{S}}_n$, with
$ \widetilde{\Lambda} \cap \widetilde{\mathfrak{S}}_n = \cT$. Like above, for all $\sigma \in \mathfrak{S}_n$ and $\nu \in \widetilde{\Lambda}$, we set 
$\sigma(\nu):=\sigma \nu \sigma^{-1}$. We also set $\widetilde{S} : = \{ ts \,| \, s \in S,\,t \in \cT \}$ and $\widetilde{S}^{\mathrm{aff}}: = \{ ts \,| \, s \in S^{\mathrm{aff}},\,t \in \cT \}$.
Finally, we can extend the length function $\ell$ of $W$  to $\widetilde{W}$ by setting $\ell(tw) = \ell(w)$ for all $w \in W$, $t\in \cT$.

\begin{thm}\label{thm1}{\rm \cite[Theorem 2.4]{Vi2}}
Let $R$ be a ring and let $(q_s, c_s)_{s \in \widetilde{S}^{{\rm aff}}} \in R \times R[\cT]$ be such that 
\begin{enumerate}[(a)]
\item $q_s = q_{ts}$ and $c_{ts} = t c_s $  for all $t \in \cT$. \smallbreak
\item $q_s = q_{s'}$ and $c_{s'} = wc_sw^{-1}$ if $s' = wsw^{-1}$ for some $w \in \widetilde{W}$. \smallbreak
\end{enumerate}
Then the free $R$-module $\cH_R(q_s, c_s)$ of basis $(T_w)_{w \in \widetilde{W}}$ has a unique $R$-algebra structure satisfying: 
\begin{itemize}
\item The braid relations: $T_wT_{w'} =T_{ww'}$ if $w,w' \in \widetilde{W}$, $\ell(w)+\ell(w')=\ell(ww')$. \smallbreak
\item The quadratic relations: $T_s^2 = q_ss^2 + c_sT_s$ for $s \in \widetilde{S}^{{\rm aff}}$. \smallbreak
\end{itemize}
\end{thm}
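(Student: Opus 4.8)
The plan is to prove uniqueness first and then build the algebra explicitly inside $\mathrm{End}_R(M)$, where $M := \bigoplus_{w \in \widetilde{W}} R\,T_w$. For uniqueness, observe that every $w \in \widetilde{W}$ factors, with length additive, as a length-zero element $u$ of the subgroup $\widetilde{\Omega} \subseteq \widetilde{W}$ generated by $\cT$ and $h$, times a reduced product of elements of $\widetilde{S}^{\mathrm{aff}}$; the braid relations then force $T_w$ to be the corresponding product of generators. Using the quadratic relations to rewrite any $T_s T_w$ with $\ell(sw) < \ell(w)$ as a combination of basis elements indexed by elements of length at most $\ell(w)$, one computes every product $T_w T_{w'}$ by induction on $\ell(w)$. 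Hence the structure constants are determined, and at most one algebra structure can exist.

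For existence I would realise the multiplication through left operators on $M$. For $u \in \widetilde{\Omega}$ put $L_u(T_w) := T_{uw}$, and for $s \in \widetilde{S}^{\mathrm{aff}}$ set
\[
 L_s(T_w) := \begin{cases} T_{sw} & \text{if } \ell(sw) > \ell(w),\\ q_s\,T_{sw} + c_s\,T_w & \text{if } \ell(sw) < \ell(w), \end{cases}
\]
where $c_s\,T_w := \sum_{t} a_t\,T_{tw}$ when $c_s = \sum_{t} a_t\,t \in R[\cT]$. The first checks are that these satisfy the quadratic relation $L_s^2 = q_s L_{s^2} + c_s L_s$ (with $c_s L_s := \sum_t a_t L_{ts}$) and the consistency $L_{ts} = L_t L_s$ for $t \in \cT$; the latter is precisely hypothesis (a), while the former is a short case analysis according to whether $\ell(sw)$ exceeds $\ell(w)$.

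The crux is to verify the braid relations among the $L_s$, together with the relations $L_u L_s L_u^{-1} = L_{usu^{-1}}$ for length-zero $u$. I would reduce these to the braid relations of the Coxeter system $(W^{\mathrm{aff}}, S^{\mathrm{aff}})$ by a reduced-word argument based on the exchange condition. The delicate point, and the main obstacle, is that the reflections occurring on the two sides of a braid relation are conjugate in $\widetilde{W}$, so that matching the parameters attached to them requires exactly hypothesis (b), namely $q_s = q_{s'}$ and $c_{s'} = w c_s w^{-1}$ whenever $s' = w s w^{-1}$; hypothesis (a) simultaneously accounts for the $\cT$-twist throughout the computation. This is the only genuinely lengthy part of the argument.

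Granting these relations, the map $w \mapsto L_w$ — the product of the $L$'s taken along any reduced expression — is well defined, so setting $T_v * T_w := L_v(T_w)$ equips $M$ with a unital product having $T_{1}$ as identity and satisfying the required braid and quadratic relations by construction. For associativity I would argue in the standard way: define right operators $R_w$ by the symmetric formulas, check that $L_a R_b = R_b L_a$ for all generators $a,b$ (again a computation on generators), and then deduce $(T_u * T_v)*T_w = T_u*(T_v * T_w)$ from the commutation of the left and right actions. This produces the algebra $\cH_R(q_s, c_s)$, which together with the uniqueness established above proves the theorem.
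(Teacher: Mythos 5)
The paper you are comparing against contains no proof of this theorem: it is quoted verbatim from Vign\'eras \cite[Theorem 2.4]{Vi2}, so the only meaningful comparison is with the classical construction that Vign\'eras herself adapts, and your outline does follow that route. Its ingredients are essentially sound: the length-additive factorisation $w = u\,s_{i_1}\cdots s_{i_r}$ through the length-zero subgroup $\widetilde{\Omega} = \cT \rtimes \langle h\rangle$ gives uniqueness by induction on $\ell(w)$; your operator formulas are the correct ones (in particular you rightly keep $L_{s^2}$ in the operator quadratic relation, since $s^2 \in \cT$ is in general nontrivial); and you locate correctly where hypothesis (a) enters. One local error: the relation $L_s^2 = q_s L_{s^2} + c_s L_s$ is \emph{not} a consequence of (a) plus a case analysis alone. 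In the case $\ell(sw) < \ell(w)$ the two sides produce the terms $q_s\, s(c_s)\, T_{sw}$ and $q_s\, c_s\, T_{sw}$ respectively, so you need $s(c_s) = c_s$, which is hypothesis (b) applied with $w = s$.

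The substantive weakness is structural, and it sits exactly at the step you call ``the only genuinely lengthy part.'' Verifying the braid relations among the $L_s$ directly, by reduced words and the exchange condition, is precisely the computation the classical method is designed to avoid --- and you already hold the tool that makes it unnecessary, since you invoke the right operators for associativity. Once $L_a R_b = R_b L_a$ is checked on generators (the critical case, where $\ell(sw) = \ell(ws') = \ell(w)+1$ and $\ell(sws') = \ell(w)$ so that $sw$ and $ws'$ agree up to an element of $\cT$, is where hypothesis (b) genuinely enters), every operator in the subalgebra $\mathcal{A} \subseteq \mathrm{End}_R(M)$ generated by the $L$'s commutes with all $R$'s. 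Since each basis vector satisfies $T_w = R_{s_{i_r}}\cdots R_{s_{i_1}} R_u(T_1)$ for a reduced decomposition $w = u\,s_{i_1}\cdots s_{i_r}$, evaluation at $T_1$ is injective on $\mathcal{A}$. Any two products of $L$'s along reduced expressions of the same $w$ send $T_1$ to $T_w$, hence coincide as operators: the braid relations among the $L$'s, and likewise the relations $L_uL_sL_u^{-1} = L_{usu^{-1}}$, are then automatic, and the same injectivity yields associativity. As written, your proposal defers its entire mathematical content to a long direct verification it never performs; reorganized so that the commutation check comes first and the braid relations are deduced from it, the same ingredients do constitute a complete proof.
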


The algebra $\cH_R(q_s, c_s)$ is called the \emph{pro-$p$-Iwahori--Hecke 
  algebra of} $\widetilde{W}$ over $R$. If $q_s$ is  invertible in $R$  for all $s \in \widetilde{S}^{{\rm aff}}$, then  $T_s$ is invertible in $\cH_{R}(q_s, c_s)$, with
\begin{equation}\label{E-inv}
T_s^{-1} = q_{s}^{-1}s^{-2} \left(T_s - c_{s} \right).
\end{equation}
We deduce that every element $T_w$, for $w \in \widetilde{W}$, is invertible in $\cH_{R}(q_s, c_s)$. 
If, further, $q_s^{1/2} \in R$ for all  $s \in \widetilde{S}^{{\rm aff}}$, then, by replacing the generators $T_s$ by $\bar{T}_s:=q_s^{-1/2}T_s$, the quadratic relations become
\begin{center}
$\bar{T}_s^2 = s^2 + q_s^{-1/2}c_s\bar{T}_s$ \,\,for all\, $s \in \widetilde{S}^{{\rm aff}}$.
\end{center}
Thus, if  $(q_s^{1/2})_{s \in\widetilde{S}^{{\rm aff}}}$ are chosen so that they also satisfy conditions $(a)$ and $(b)$ of Theorem \ref{thm1}, we obtain an isomorphism
between $\cH_{R}(q_s, c_s)$  and $\cH_{R}(1, q_s^{-1/2}c_s)$. 
Therefore, without loss of generality, we may assume that $q_s=1$ for all $s \in  \widetilde{S}^{{\rm aff}}$.  \smallbreak

We now have the following Bernstein presentation of  $\cH_R(1, c_s)$.

\begin{thm}\label{thm2}{\rm \cite[Theorem 2.10]{Vi2}}
The $R$-algebra  $\cH_{R}(1, c_s)$  is isomorphic to the free $R$-module of basis
$(E(w))_{w \in \widetilde{W}}$ endowed with the unique $R$-algebra structure satisfying: 
\begin{itemize}
\item Braid relations: $E(w)E(w') = E(ww')$ for $w, w' \in \widetilde{\mathfrak{S}}_n$, $\ell(w) + \ell(w') = \ell(ww')$. \smallbreak
\item Quadratic relations: $E(s)^2 = s^2 + c_sE(s)$ for $s \in \widetilde{S}$.  \smallbreak
\item Product relations: $E(\nu)E(w) = E(\nu w)$ for $\nu \in \widetilde{\Lambda}$, $w \in \widetilde{W}$.  \smallbreak
\item Bernstein relations: For $s=ts_i \in \widetilde{S}$ ($t \in \cT$, $i=1,\ldots,n-1$) and $\nu = \tau \lambda \in \widetilde{\Lambda}$ ($\tau \in \cT$, $\lambda \in \Lambda$), 
$$E(s(\nu))E(s) - E(s)E(\nu) =
\left\{\begin{array}{ll}
0 & \text{if } s_i(\lambda)=\lambda,\\ [0.2em]
  \epsilon_i (\lambda) \,c_{s} \sum_{k=0}^{|\varepsilon_i \circ \lambda|-1} E(\tau\mu_{i}(k,\lambda)) 
&\text{if } s_i(\lambda) \neq \lambda,\\
\end{array}\right.$$ 
\end{itemize} where 
$ \epsilon_i (\lambda) = {\rm sign}(\varepsilon_i \circ \lambda) \in \{1,-1\}$ and
$$
\mu_{i}(k,\lambda) = \left\{\begin{array}{ll}
\lambda \,\varepsilon_i^k & \text{if }  \epsilon_i (\lambda) =-1\\ [0.2em]
s_i(\lambda) \,\varepsilon_i^k  & \text{if }  \epsilon_i (\lambda) =1\,.\\
\end{array}\right.
$$
\end{thm}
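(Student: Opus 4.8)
The plan is to realise the Bernstein basis $(E(w))_{w\in\widetilde{W}}$ explicitly inside the algebra $\cH_R(1,c_s)$ furnished by Theorem \ref{thm1}, and then to verify the four families of relations. For $w\in\widetilde{\mathfrak{S}}_n$ I would simply set $E(w):=T_w$; with this choice the braid relations and the quadratic relations $E(s)^2=s^2+c_sE(s)$ are nothing but the defining relations of the Iwahori--Matsumoto presentation (recall that we have normalised $q_s=1$), so these two items are immediate. All the content is therefore concentrated in defining $E(\nu)$ for $\nu\in\widetilde{\Lambda}$ in such a way that the product relations hold, the obstruction being that the length function fails to be additive on $\widetilde{\Lambda}$, so that $T_\nu T_{\nu'}\neq T_{\nu\nu'}$ in general.

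To handle this I would use the Bernstein--Lusztig device. Writing $\nu=\tau\lambda$ with $\tau\in\cT$ and $\lambda\in\Lambda$, and decomposing $\lambda=\mu(\mu')^{-1}$ as a ``difference'' of two dominant weights (which is possible since for $\Lambda=\Z^n$ every element is a difference of two weakly decreasing sequences), I would set $E(\nu):=T_\tau T_\mu T_{\mu'}^{-1}$; this makes sense because every $T_s$, hence every $T_w$, is invertible when $q_s=1$. The first checkpoint is well-definedness, that is, independence of the chosen decomposition. This rests on the additivity lemma $\ell(\mu\mu')=\ell(\mu)+\ell(\mu')$ for dominant $\mu,\mu'$, which via the braid relations yields $T_\mu T_{\mu'}=T_{\mu\mu'}$; a standard cancellation argument then shows that $T_\mu T_{\mu'}^{-1}$ depends only on $\lambda$. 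Extending by $E(\nu\sigma):=E(\nu)E(\sigma)$ for $\sigma\in\widetilde{\mathfrak{S}}_n$ defines $E$ on all of $\widetilde{W}=\widetilde{\Lambda}\widetilde{\mathfrak{S}}_n$, and the same additivity lemma gives the product relations $E(\nu)E(w)=E(\nu w)$. That $(E(w))_w$ is an $R$-basis then follows because it is obtained from the Iwahori--Matsumoto basis $(T_w)_w$ by a transition matrix that is unitriangular with respect to the length order.

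The main obstacle is the Bernstein relation, which controls how $E(\nu)$ and $E(s)$ fail to commute. I would establish it by induction on $|\varepsilon_i\circ\lambda|=|\lambda_i-\lambda_{i+1}|$, reducing the general statement to the two base cases $\lambda=\varepsilon_i^{\pm1}$: the sum $\sum_{k}E(\tau\mu_i(k,\lambda))$ on the right-hand side is built precisely so that the one-step relations telescope correctly under the product relations, while the sign $\epsilon_i(\lambda)$ and the shift $\mu_i(k,\lambda)$ record whether $\lambda$ or $s_i(\lambda)$ supplies the dominant representative. The base case is a direct computation with the single quadratic relation $E(s)^2=s^2+c_sE(s)$, exactly as in the $\mathrm{SL}_2$ calculation underlying the classical Bernstein presentation. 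The one genuinely new feature, relative to the affine Hecke algebra, is the bookkeeping of the $\cT$-twist: since $c_s\in R[\cT]$ and the group $\cT$ is permuted by $\mathfrak{S}_n$, one must track the factor $\tau$ through $s_i$ (using $s_i(\nu)=s_i\nu s_i^{-1}$) and check that it lands in the correct slot $E(\tau\mu_i(k,\lambda))$; conditions $(a)$ and $(b)$ of Theorem \ref{thm1} on the datum $(q_s,c_s)$ are exactly what guarantee that this twisting is consistent. I expect this $\cT$-equivariant bookkeeping, rather than any conceptual difficulty, to be where the care is required.
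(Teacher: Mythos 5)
First, a point of order: the paper does not prove this theorem at all --- it is quoted from Vign\'eras \cite[Theorem 2.10]{Vi2} and used as a black box --- so there is no internal proof to compare yours against. Judged against the standard argument (Vign\'eras's own, which your sketch follows in outline: an explicit candidate basis built from representatives in a fixed Weyl chamber, product relations from length-additivity on that chamber, unitriangularity for the basis property, Bernstein relations by reduction to rank one), your plan has the right architecture, and the well-definedness, extension and basis steps are fine. But two steps fail as written.

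The serious gap is the base of your induction for the Bernstein relations. By the product relations, the defect $D(\lambda) := E(s_i(\lambda))E(s_i) - E(s_i)E(\lambda)$ satisfies the twisted derivation identity $D(\lambda\lambda') = E(s_i(\lambda))D(\lambda') + D(\lambda)E(\lambda')$, so its values are determined by its values on a generating set of $\Lambda$; this is exactly your telescoping. Your base cases are the $s_i$-fixed $\lambda$ (the commuting case) and $\lambda = \varepsilon_i^{\pm 1}$. But $\varepsilon_i \circ \varepsilon_i^{\pm 1} = \pm 2$, and the subgroup of $\Lambda = \Z^n$ generated by $\varepsilon_i$ together with $\{\lambda : s_i(\lambda) = \lambda\}$ is $\{\lambda : \lambda_i - \lambda_{i+1} \in 2\Z\}$, an index-two sublattice: your induction steps $|\varepsilon_i \circ \lambda|$ down by $2$ at a time and can never reach any $\lambda$ with $\lambda_i - \lambda_{i+1}$ odd. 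In particular it never proves the relation for $\lambda = (1,0,\ldots,0)$ --- precisely the instance this paper invokes in the proof of Theorem \ref{main}. The unavoidable base case is $|\varepsilon_i \circ \lambda| = 1$ (a coordinate vector), and that rank-one computation, which genuinely uses the quadratic relation and the length-zero elements of $\widetilde{W}$, is the real content of the theorem; it is not a consequence of the $\varepsilon_i^{\pm 1}$ case.

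Second, your chamber is the wrong one: with the paper's conventions, weakly decreasing representatives produce the mirrored Bernstein relations, with the opposite sign. Take $n = 2$, $d = 1$ (so $c_{s_1} = q - q^{-1} =: c$ is a scalar) and $\lambda = (1,0)$. The element $h = (0,1)s_1$ has length $0$, so the braid relations give $T_{(0,1)} = T_hT_{s_1}$, $T_{(1,0)} = T_{s_1}T_h$, and $T_{(1,1)} = T_h^2$ is central. Your definition yields $E((1,0)) = T_{(1,0)}$ and $E((0,1)) = T_{(1,1)}T_{(1,0)}^{-1} = T_hT_{s_1}^{-1}$, hence
$$E((0,1))E(s_1) - E(s_1)E((1,0)) = T_h - T_{s_1}^2T_h = -c\,E((1,0)),$$
not $+c\,E((0,1))$ as the theorem requires. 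With weakly increasing representatives instead, $E((0,1)) = T_hT_{s_1}$ and $E((1,0)) = T_{s_1}^{-1}T_h$, and the same computation gives $T_hT_{s_1}^2 - T_h = c\,E((0,1))$, as stated. So the decomposition must be taken in the opposite (antidominant) chamber; equivalently, the relations your construction verifies are those of the opposite orientation, not the ones in the statement. Both repairs are routine, but as written the induction cannot reach the odd cases and, where it does apply, it verifies the wrong identity.
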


\begin{rem}\label{explain inv}{\rm
Note that, in the above presentation, we take $E(s) = T_s$ for all $s \in \widetilde{S}$ (in particular, $E(t)=t$ for all $t \in \cT$).}
\end{rem}

\section{Main result}

Let $q$ be an indeterminate or a non-zero complex number. 
Our aim in this section will be to show that the affine Yokonuma--Hecke algebra $\Y$ is isomorphic to the  pro-$p$-Iwahori--Hecke
 algebra  $\cH_{R}(q_s, c_s) \text{ of } \widetilde{W}$  when we take 
\begin{itemize}
\item $\cT = (\Z /d \Z)^n = \langle \, t_1,\ldots,t_n \,\,|\,\, t_j^d=1,\,\,t_it_j=t_jt_i, \,\,\text{for all } i,j=1,\ldots,n \,\rangle$ ; \smallbreak
\item $R = \cR = \C[q,q^{-1}]$ ; \smallbreak
\item $q_{ts_i}=1$ for all $i=0,1,\ldots,n-1$, $t \in  (\Z /d \Z)^n$ ; \smallbreak
\item $c_{ts_i} = (q-q^{-1})\, t\,e_i$  for all $i=0,1,\ldots,n-1$, $t \in  (\Z /d \Z)^n$, where $e_0:=e_{1,n}$.
\end{itemize}
We then have $\widetilde{W} = ( \Z /d \Z \times \Z)^n \rtimes \mathfrak{S}_n =  \{ t \,\lambda\, \sigma\,|\, t \in  (\Z /d \Z)^n,\, \lambda \in \Lambda,\,\sigma \in \mathfrak{S}_n\}$. 
The action of $\mathfrak{S}_n$ on $ (\Z /d \Z)^n $ is given by
$$ \sigma(t_j) = \sigma t_j \sigma^{-1} = t_{\sigma(j)} \quad \text{ for all } \sigma \in \mathfrak{S}_n, \, j=1,\ldots,n.$$
The action of $\mathfrak{S}_n$ extends linearly to the group algebra ${\cR}[(\Z /d \Z)^n]$.

First we check that the assumptions $(a)$ and $(b)$ of Theorem \ref{thm1} are satisfied in this case.
Let $s \in \widetilde{S}^{\rm aff} = \{ts_i \,|\, i=0,1,\ldots,n-1, t \in  (\Z /d \Z)^n\}$.  
By definition, we have $q_{s} = q_{ts} = 1$ and $c_{ts} =t c_{s}$ for all $t \in  (\Z /d \Z)^n$.

Now, set
$$\lambda_0:=(-1,0,\ldots,0,1) \in \Lambda \quad \text{ and } \quad \lambda_i:= (0,0,\ldots,0,0) \in \Lambda \quad \text{for all } i=1,\ldots,n-1.$$
Moreover, let $\sigma_0$ denote the transposition $(1,n)=s_{1,n}$ and $\sigma_i$ denote the transposition $(i,i+1)=s_i$ for all $i=1,\ldots,n-1$.
We then have
$$s_i = \lambda_i \sigma_i \quad \text{for all } i=0,1,\ldots,n-1.$$
So
$$\widetilde{S}^{\rm aff} = \{t\lambda_i \sigma_i \,|\, i=0,1,\ldots,n-1, t \in  (\Z /d \Z)^n\}.$$

Let $s,s' \in \widetilde{S}^{\rm aff} $ be conjugate in $\widetilde{W}$. 
By definition, we have $q_{s} =  q_{s'}=1$.
Now, let us write $s=t\lambda_i\sigma_i$ and $s' = t'\lambda_{i'}\sigma_{i'}$ for $i,i' \in \{0,1,\ldots,n-1\}$ and $t,t' \in  (\Z /d \Z)^n$.
Moreover, let $w \in \widetilde{W}$ be such that  $s' = wsw^{-1}$, and write $w = \tau \lambda \sigma$
with $\tau \in (\Z /d \Z)^n$, $\lambda \in \Lambda$  and $\sigma \in \mathfrak{S}_n$.
Then
$$t'\lambda_{i'}\sigma_{i'}= ( \tau \lambda \sigma) (t \lambda_i\sigma_i) (\sigma^{-1} \lambda^{-1} \tau^{-1}) =  [ \tau \sigma(t) (\sigma \sigma_i \sigma^{-1}) (\tau^{-1})] [\lambda \sigma(\lambda_i) (\sigma \sigma_i \sigma^{-1})  (\lambda^{-1})]
[\sigma \sigma_i \sigma^{-1}].$$
We deduce that 
$$\sigma_{i'} = \sigma \sigma_i \sigma^{-1} \quad\text{ and }\quad t' =  \tau \sigma(t) (\sigma \sigma_i \sigma^{-1}) (\tau^{-1})= \tau \sigma(t)  \sigma_{i'}(\tau^{-1}).$$
By  \eqref{useful e_i}, we have
\begin{equation}\label{cond1}
 t' e_{i'} =  \tau \sigma(t)  \sigma_{i'}(\tau^{-1}) e_{i'} =  \tau \sigma(t) \tau^{-1} e_{i'} = \sigma(t) e_{i'} = \sigma(t) w w^{-1} e_{i'} = wtw^{-1}e_{i'}  .
 \end{equation}
Furthermore, we have
$$\sigma_{i'} = \sigma \sigma_i \sigma^{-1} = s_{\sigma(i), \sigma(i+1)}=(\sigma(i), \sigma(i+1)),$$ 
which implies that
\begin{equation}\label{cond2}
we_i w^{-1} = \sigma(e_i) w w^{-1} = \sigma(e_i) = e_{\sigma(i), \sigma(i+1)} = e_{i'}. 
\end{equation}
Combining \eqref{cond1} and \eqref{cond2}, we obtain
$$c_{s'}=(q-q^{-1})t'e_{i'}=(q-q^{-1})  wtw^{-1} we_i w^{-1}  = w \left( (q-q^{-1}) t e_i \right) w^{-1} = wc_{s}w^{-1}.$$

Therefore, we can define the pro-$p$-Iwahori--Hecke algebra $\cH_{{\cR}}(1, c_s)$ of $\widetilde{W}$, which is the free ${\cR}$-module of basis
$(E(w))_{w \in \widetilde{W}}$  endowed with the unique ${\cR}$-algebra structure satisfying the relations described explicitly in the previous section.

\begin{thm}\label{main}
The ${\cR}$-linear map $\varphi : \Y \rightarrow \cH_{{\cR}}(1, c_s)$ defined by
\begin{equation}\label{bas to bas}
\varphi(t_1^{a_1}\cdots t_n^{a_n} \,X_1^{b_1}\cdots X_n^{b_n}\, g_w) = E( t_1^{a_1}\cdots t_n^{a_n}\, (b_1,\ldots,b_n)\, w)\ ,
\end{equation}
for all $a_1,\ldots,a_n\in \Z/d\Z$, $b_1,\ldots,b_n \in \Z,$ and $w \in \mathfrak{S}_n$,
is an ${\cR}$-algebra isomorphism.
\end{thm}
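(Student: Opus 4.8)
The plan is to exhibit $\varphi$ as a bijection on bases that is also an algebra homomorphism. First I would note that $\varphi$ is already an isomorphism of $\cR$-modules: by Theorem~\ref{theo-bases} the elements $t_1^{a_1}\cdots t_n^{a_n}X_1^{b_1}\cdots X_n^{b_n}g_w$ form an $\cR$-basis of $\Y$, while $(E(w))_{w\in\widetilde{W}}$ is an $\cR$-basis of $\cH_{\cR}(1,c_s)$, and the assignment $(a_1,\ldots,a_n,b_1,\ldots,b_n,w)\mapsto t_1^{a_1}\cdots t_n^{a_n}\,(b_1,\ldots,b_n)\,w$ is precisely the normal form $t\lambda\sigma$ of an element of $\widetilde{W}=(\Z/d\Z)^n\times\Z^n\rtimes\mathfrak{S}_n$, hence a bijection onto $\widetilde{W}$. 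Thus $\varphi$ is $\cR$-linear and bijective, and it only remains to show that it is multiplicative, after which, being a bijective algebra homomorphism, it is an isomorphism.

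Rather than computing $\varphi(xy)$ on pairs of basis elements, I would define an $\cR$-algebra homomorphism $\psi\colon\Y\to\cH_{\cR}(1,c_s)$ on generators and then prove $\psi=\varphi$. Using the presentation of $\Y$ by the relations (br1)--(quad), set $\psi(t_j):=t_j=E(t_j)$, $\psi(g_i):=E(s_i)$, $\psi(X_1):=E(\lambda^{(1)})$ and $\psi(X_1^{-1}):=E(-\lambda^{(1)})$, where $\lambda^{(1)}=(1,0,\ldots,0)$ and $E(\lambda^{(1)})$ is invertible with inverse $E(-\lambda^{(1)})$ by the product relations. Most relations are then routine to check from the braid, product and quadratic relations of Theorem~\ref{thm2}: (fr1) and (fr3) hold since $\cT$ is abelian of exponent $d$; (br1), (br2) are the braid relations for the $E(s_i)$ in $\widetilde{\mathfrak{S}}_n$; (fr2) follows from $E(s_i)E(t_j)=E(s_it_j)=E(t_{s_i(j)}s_i)=E(t_{s_i(j)})E(s_i)$ because the relevant lengths add; (aff3) holds because $\widetilde{\Lambda}=\cT\times\Lambda$ is abelian; and the quadratic relation is matched because $s_i^2=1$ and $c_{s_i}=(q-q^{-1})e_i$, so that $E(s_i)^2=1+(q-q^{-1})e_iE(s_i)$ reproduces (quad) once one checks $\psi(e_i)=e_i$.

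The heart of the argument, and the step I expect to be the main obstacle, is the claim that $\psi(X_i)=E(\lambda^{(i)})$ for every $i$, where $\lambda^{(i)}$ denotes the $i$-th standard basis vector of $\Lambda$. I would prove this by induction using $X_{i+1}=g_iX_ig_i$. Applying the Bernstein relation of Theorem~\ref{thm2} with $s=s_i$, $\tau=1$ and $\nu=\lambda=\lambda^{(i)}$ --- so that $\varepsilon_i\circ\lambda=1$, $\epsilon_i(\lambda)=1$ and $\mu_i(0,\lambda)=s_i(\lambda)=\lambda^{(i+1)}$ --- gives $E(\lambda^{(i+1)})E(s_i)-E(s_i)E(\lambda^{(i)})=(q-q^{-1})e_iE(\lambda^{(i+1)})$. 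Multiplying on the right by $E(s_i)$, substituting $E(s_i)^2=1+(q-q^{-1})e_iE(s_i)$, and using that $E(\lambda^{(i+1)})$ commutes with $e_i$ (again because $\widetilde{\Lambda}$ is abelian) makes the two $(q-q^{-1})e_i$-terms cancel and leaves $E(s_i)E(\lambda^{(i)})E(s_i)=E(\lambda^{(i+1)})$, which is exactly $\psi(X_{i+1})=E(\lambda^{(i+1)})$. This is where the combinatorial data $\varepsilon_i\circ\lambda$, $\epsilon_i(\lambda)$ and $\mu_i(k,\lambda)$ of the Bernstein presentation must be reconciled with the affine structure of $\Y$; once it is available the remaining affine relations are immediate, since (aff1) is equivalent to $X_1X_2=X_2X_1$, i.e.\ to $E(\lambda^{(1)})E(\lambda^{(2)})=E(\lambda^{(2)})E(\lambda^{(1)})$, which holds by the product relations, and (aff2) is the $s_i(\lambda^{(1)})=\lambda^{(1)}$ case of the Bernstein relation for $i\geq 2$.

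With all relations verified, $\psi$ is a well-defined $\cR$-algebra homomorphism, and I would conclude by checking $\psi=\varphi$ on the basis $\mathcal{B}^{\mathrm{aff}}_{d,n}$. Indeed $\psi(g_w)=E(w)$ for $w\in\mathfrak{S}_n$ by applying the braid relations to a reduced expression, $\psi(X_1^{b_1}\cdots X_n^{b_n})=E((b_1,\ldots,b_n))$ and $\psi(t_1^{a_1}\cdots t_n^{a_n})=E(t_1^{a_1}\cdots t_n^{a_n})$ by the product relations in the abelian group $\widetilde{\Lambda}$, and a final use of $E(\nu)E(w)=E(\nu w)$ assembles these into $E(t_1^{a_1}\cdots t_n^{a_n}(b_1,\ldots,b_n)w)$, which is $\varphi(t_1^{a_1}\cdots t_n^{a_n}X_1^{b_1}\cdots X_n^{b_n}g_w)$. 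Hence $\psi$ and $\varphi$ agree on a basis, so $\varphi=\psi$ is multiplicative; being bijective, it is the desired $\cR$-algebra isomorphism.
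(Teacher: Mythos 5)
Your proposal is correct and takes essentially the same route as the paper: define the map on generators, verify the defining relations of $\Y$ using the braid, quadratic, product and Bernstein relations of Theorem~\ref{thm2}, show the images of the basis elements of Theorem~\ref{theo-bases} are exactly the $E(w)$, $w \in \widetilde{W}$, and conclude bijectivity from the two basis theorems. Your key identity $E(s_i)E(\lambda^{(i)})E(s_i)=E(\lambda^{(i+1)})$, obtained by multiplying the Bernstein relation by $E(s_i)$ and cancelling via the quadratic relation, is just a repackaging of the paper's step \eqref{similar}, where the same cancellation is done through the inverse formula $E(s_1)^{-1}=E(s_1)-c_{s_1}$ from \eqref{E-inv}.
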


\begin{proof}
We will show first that we can define an ${\cR}$-algebra homomorphism $\varphi : \Y \rightarrow \cH_{{\cR}}(1, c_s)$ given by
$$\begin{array}{rcll}
\varphi(t_j)&=&E(t_j) &\text{ for all } j=1,\ldots,n, \\ [0.2em]
\varphi(g_i)&=&E(s_i) & \text{ for all } i=1,\ldots,n-1,\\  [0.2em]
\varphi(X_1)&=&E((1,0,\ldots,0)), & \\[0.2em]
 \varphi(X_1^{-1})&=&E((-1,0,\ldots,0)). &\\[0.2em]
 \end{array}$$
For this, it is enough to check that the defining relations (\ref{def-aff1}) of $\Y$ are satisfied by the images of its generators via $\varphi$, that is, the elements
$$\varphi(t_1),\ldots,\varphi(t_n),\varphi(g_1),\ldots,\varphi(g_{n-1}),\varphi(X_1),\varphi(X_1^{-1}) .$$ 

First of all, note that $\varphi(X_1^{-1})=E((-1,0,\ldots,0)) = E((1,0,\ldots,0))^{-1} = \varphi(X_1)^{-1}$, due to the product relations. The product relations also imply (aff3).
Moreover, due to the braid relations, we have immediately that (br1), (br2), (fr1) and (fr3) are satisfied. 
We also obtain 
$$\varphi(g_i)\varphi(t_j)=E(s_i)E(t_j) = E(s_it_j) = E(t_{s_i(j)}s_i)=E(t_{s_i(j)})E(s_i)=\varphi(t_{s_i(j)})\varphi(g_i)$$
for all $i=1,\ldots,n-1$ and $j=1,\ldots,n$, so (fr2) holds.

Now, for $i=1,\ldots,n-1$, we have
$$\varphi(g_i)^2= E(s_i)^2 = s_i^2 + c_{s_i}E(s_i) = 1 + (q-q^{-1})e_iE(s_i) = 1 + (q-q^{-1})e_i \varphi(g_i),$$
so (quad) is satisfied by $\varphi(g_i)$.
We will use the Bernstein relations for the remaining defining relations, (aff1) and (aff2).

First, note that $(1,0,\ldots,0)$ is fixed by the action of $s_i$ for all $i=2,\ldots,n-1$. 
We thus obtain
$$\varphi(X_1)\varphi(g_i) = E((1,0,\ldots,0))E(s_i) = E(s_i(1,0,\ldots,0)s_i^{-1})E(s_i) = E(s_i) E((1,0,\ldots,0))=\varphi(g_i) \varphi(X_1).$$
This yields (aff2).

Now, $(1,0,\ldots,0)$ is not fixed by the action of $s_1$.  
By the Bernstein relations, we have
$$E((0,1,\ldots,0))E(s_1) - E(s_1)E((1,0,\ldots,0)) = c_{s_1}E((0,1,\ldots,0)),$$
and thus,
$$E(s_1)E((1,0,\ldots,0)) = E((0,1,\ldots,0))E(s_1) - c_{s_1} E((0,1,\ldots,0)) =  E((0,1,\ldots,0)) (E(s_1) - c_{s_1}).$$
By (\ref{E-inv}) and Remark \ref{explain inv}, we have that $E(s_1) - c_{s_1} = E(s_1)^{-1}$, and so
\begin{equation}\label{similar}
E(s_1)E((1,0,\ldots,0)) =E((0,1,\ldots,0)) E(s_1)^{-1}.
\end{equation}
We  obtain
$$\begin{array}{rcl}
\varphi(X_1)\varphi(g_1)\varphi(X_1)\varphi(g_1) & = & E((1,0,\ldots,0))E(s_1)E((1,0,\ldots,0))E(s_1) \\[0.2em]
 &  = &  E((1,0,\ldots,0))E((0,1,\ldots,0)) E(s_1)^{-1}E(s_1) \\[0.2em]
 & =   &E((1,0,\ldots,0))E((0,1,\ldots,0)).\\[0.2em]
 & =   &E((1,1,\ldots,0)).\\[0.2em]
 \end{array}$$
and
$$\begin{array}{rcl}
\varphi(g_1)\varphi(X_1)\varphi(g_1)\varphi(X_1) & = & E(s_1)E((1,0,\ldots,0))E(s_1)E((1,0,\ldots,0)) \\[0.2em]
 &  = &  E((0,1,\ldots,0))E(s_1)^{-1} E(s_1) E((1,0,\ldots,0)) \\[0.2em]
 & =   &E((0,1,\ldots,0))E((1,0,\ldots,0)).\\[0.2em]
 & =   &E((1,1,\ldots,0)).\\[0.2em]
 \end{array}$$
 Thus, (aff1) also holds, and $\varphi$ is an $\cR$-algebra homomorphism.
 
 We will now prove that $\varphi$ is indeed the $\cR$-linear map given by \eqref{bas to bas}. First, for all $i=1,\ldots,n-1$, we have
 $$\begin{array}{rcl}
 \varphi(X_{i+1})&=& \varphi(g_ig_{i-1}\ldots g_1X_1 g_1\ldots g_{i-1}g_i) \\[0.2em]
 &=& \varphi(g_i) \varphi(g_{i-1})\ldots  \varphi(g_1) \varphi(X_1)  \varphi(g_1)\ldots  \varphi(g_{i-1}) \varphi(g_i) \\[0.2em]
 &=&E(s_i) E(s_{i-1})\ldots  E(s_1)E((1,0,\ldots,0))  E(s_1)\ldots E(s_{i-1}) E(s_i). \\[0.2em]
 \end{array}$$
 Using repeatedly the Bernstein relations (similarly to \eqref{similar}), we obtain that
 $$\varphi(X_{i+1})=E((0,0,\ldots,0,1,0,\ldots,0)), $$
where $1$ is in position $i+1$. Now, by the product and braid relations, we obviously get
$$\varphi(t_1^{a_1}\cdots t_n^{a_n} \,X_1^{b_1}\cdots X_n^{b_n}\, g_w) = E( t_1^{a_1}\cdots t_n^{a_n}\, (b_1,\ldots,b_n)\, w),$$
for all $a_1,\ldots,a_n\in \Z/d\Z$, $b_1,\ldots,b_n \in \Z,$ and $w \in \mathfrak{S}_n$.
 
 Finally, by Theorem \ref{theo-bases} and Theorem \ref{thm2}, the map $\varphi$ is bijective, so $\varphi$ is 
 an $\cR$-algebra isomorphism, as required.
\end{proof}

Now, if $q^2$ is a power of a prime number $p$ and $d=q^2-1$, then  the pro-$p$-Iwahori--Hecke algebra $\cH_{{\cR}}(1, c_s)$
defined above is the convolution algebra 
of complex valued and compactly supported functions on the group 
${\rm GL}_{n}(F)$, with $F$ a suitable $p$-adic field, that are bi-invariant 
under the pro-$p$-radical of an Iwahori subgroup \cite[Theorem 1]{Vi1}. Therefore, the following result, also stated in
\cite[Remark 2.2]{ChPo2},
is an immediate consequence of Theorem \ref{main}.

%

\begin{cor}
If $q^2=p^k$, where $p$ is a prime number and $k \in \mathbb{Z}_{>0}$, and $d=q^2-1$,
then the affine Yokonuma--Hecke 
algebra $\Y$ is isomorphic to the convolution algebra 
of complex valued and compactly supported functions on the group 
${\rm GL}_{n}(F)$, where
 $F$ is a local non-archimedean field of residual field $\mathbb{F}_{q^2}$, that are bi-invariant 
under the pro-$p$-radical of an Iwahori subgroup.
\end{cor}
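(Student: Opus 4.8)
The plan is to deduce the Corollary as a composition of two isomorphisms: the $\cR$-algebra isomorphism $\varphi$ furnished by Theorem \ref{main}, and Vign\'eras's identification of the pro-$p$-Iwahori--Hecke algebra with the convolution algebra of $\mathrm{GL}_n(F)$ recorded in \cite[Theorem 1]{Vi1}. First I would specialise the ground ring. Since $q^2 = p^k$ is a fixed non-zero complex number, take $q$ to be a fixed square root of $p^k$, so that $\cR = \C[q,q^{-1}]$ becomes $\C$; set $d = q^2 - 1$ and $\cT = (\Z/d\Z)^n$ exactly as in the hypotheses preceding Theorem \ref{main}. With these choices Theorem \ref{main} already yields a $\C$-algebra isomorphism $\varphi \colon \Y \to \cH_{\C}(1,c_s)$, so the whole task reduces to recognising $\cH_{\C}(1,c_s)$ as the convolution algebra in the statement.

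The second step is this recognition. Let $F$ be a local non-archimedean field with residue field $\mathbb{F}_{q^2}$ (of cardinality $q^2 = p^k$), let $I$ be an Iwahori subgroup of $\mathrm{GL}_n(F)$ and $I(1)$ its pro-$p$-radical. By \cite[Theorem 1]{Vi1}, the convolution algebra of compactly supported complex-valued functions on $\mathrm{GL}_n(F)$ that are bi-invariant under $I(1)$ is a pro-$p$-Iwahori--Hecke algebra of the extended affine Weyl group $\widetilde{W}$ attached to $\mathrm{GL}_n(F)$. The crux is then to check that the combinatorial data of this algebra coincides with the data fixed before Theorem \ref{main}: the finite group $\cT \cong I/I(1)$ is the group of $\mathbb{F}_{q^2}$-points of the diagonal torus, hence $(\mathbb{F}_{q^2}^{\times})^n \cong (\Z/(q^2-1)\Z)^n = (\Z/d\Z)^n$, which is precisely why the choice $d = q^2 - 1$ is forced and why the residue field must have exactly $q^2 = d+1$ elements. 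One must also verify that, once the structure constants of the convolution algebra are put in the normalisation $q_s = 1$ explained after Theorem \ref{thm1} (using that $q^2$ has a square root in $\C$), they become $q_s = 1$ and $c_{ts_i} = (q-q^{-1})\,t\,e_i$, the residue cardinality $q^2$ entering through the factor $q - q^{-1}$ and the idempotents $e_i$ encoding the averaging over $\cT$ built into $I(1)$-bi-invariance.

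Once this parameter matching is secured, the convolution algebra is literally $\cH_{\C}(1,c_s)$ for the data chosen above, and composing with $\varphi$ produces the desired $\C$-algebra isomorphism between $\Y$ and the convolution algebra. I expect the genuine content, as opposed to formal composition, to lie entirely in the second step: matching Vign\'eras's normalisation of the quadratic and braid relations for $\mathrm{GL}_n(F)$ with the specialisation $q_s = 1$, $c_{ts_i} = (q-q^{-1})\,t\,e_i$, $\cT = (\Z/d\Z)^n$, and confirming that the torus $I/I(1)$ coincides with $\cT$ exactly when the residue field is $\mathbb{F}_{q^2}$. This verification is supplied by \cite[Theorem 1]{Vi1}; everything else is the routine composition of the two isomorphisms.
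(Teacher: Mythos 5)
Your proposal is correct and follows exactly the paper's route: the corollary is obtained by composing the isomorphism $\varphi$ of Theorem \ref{main} with Vign\'eras's identification \cite[Theorem 1]{Vi1} of $\cH_{\C}(1,c_s)$ (for $d=q^2-1$, $\cT=(\Z/d\Z)^n$) with the convolution algebra of $I(1)$-bi-invariant functions on ${\rm GL}_n(F)$. The parameter-matching you describe in your second step is precisely what the paper delegates wholesale to \cite[Theorem 1]{Vi1}, so there is no substantive difference between the two arguments.
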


\end{document}